\newtheorem{theorem}{Theorem}
\newtheorem{lemma}[theorem]{Lemma}
\newdefinition{remark}{Remark}
\newproof{proof}{Proof}
\journal{CNSNS}
\begin{document}

\begin{frontmatter}

\title{Stability for nonlinear wave motions damped by time-dependent frictions\tnoteref{mytitlenote}}
\tnotetext[mytitlenote]{This research was partially supported by the National Natural Science Foundation of China (11802236) and the Fundamental Research Funds for the Central Universities (310201911CX033).}



\author[mymainaddress]{Zhe Jiao}
\ead{zjiao@nwpu.edu.cn}

\author[mymainaddress,mysecondaryaddress]{Yong Xu}
\ead{hsux3@nwpu.edu.cn}

\author[mymainaddress]{Lijing Zhao\corref{mycorrespondingauthor}}
\cortext[mycorrespondingauthor]{Corresponding author}
\ead{zhaolj@nwpu.edu.cn}

\address[mymainaddress]{Department of Applied Mathematics, Northwestern Polytechnical University, Xi'an 710129, People's Republic of China}
\address[mysecondaryaddress]{MIIT key Laboratory of Dynamics and Control of Complex Systems, Northwestern Polytechnical University, Xi'an 710072, People's Republic of China}

\begin{abstract}
We are concerned with the dynamical behavior of solutions to semilinear wave systems with time-varying damping and nonconvex force potential. Our result shows that the dynamical behavior of solution is asymptotically stable without any bifurcation and chaos. And it is a sharp condition on the damping coefficient for the solution to converge to some equilibrium. To illustrate our theoretical results, we provide some numerical simulations for dissipative sine-Gordon equation and dissipative Klein-Gordon equation.
\end{abstract}

\begin{keyword}
dissipative wave system, nonautonomous damping, convergence to equilibria, decay rate

\MSC[2010] 35L70 \sep  35B35
\end{keyword}

\end{frontmatter}


\section{Introduction}
In this paper, we consider the initial-boundary value problem for semilinear wave systems with time-varying dissipation: 
\begin{eqnarray}
\left
    \{
        \begin{array}{ll}
            u_{tt}-\Delta u+ h(t)u_t +f(u)=0 &(t, x)\in\mathbb{R}^{+} \times \Omega,  \\
                                u = 0  & (t, x)\in \mathbb{R}^{+} \times \partial\Omega,\\
                  (u(0, x), u_t(0, x))= (u_{0}(x), u_{1}(x)) & x\in \Omega,
        \end{array} \label{1}
\right.
\end{eqnarray}
where $\Omega$ is a bounded domain in $\mathbb{R}^d$, $d\geq 1$, with smooth boundary $\partial\Omega$, and the initial data  $u_{0} \in H^{2}(\Omega)\cap H^{1}_{0}(\Omega)$ and $u_{1} \in H^{1}_{0}(\Omega)$. The main purpose of the paper is to study the dynamical behavior of the solution of dissipative system (\ref{1}) damped by time-dependent frictions. It is clear that the nonlinearity $f$ and the coefficient $h(t)$ play significant roles in the analysis. In particular, the set of equilibria associated to system (\ref{1}) depends on the assumptions on the nonlinearity $f$. And whether $h(t)$ is asymptotically vanishing or becoming larger too rapidly as time goes to infinity, convergence to some equilibrium for the solution of system (\ref{1}) may fail. 

Let us first begin the introduction with a short survey in the literature. 
The dynamical behavior of solution to system (\ref{1}) and its ODE version
\[
	\ddot{x}(t)+h(t)\dot{x}(t) +\nabla \Phi(x(t))=0, \quad t\geq 0
\]
with a potential $\Phi \in W^{2, \infty}_{\textrm{loc}}(\mathbb{R}^{d}, \mathbb{R})$, which models the heavy ball with friction, have been studied by many authors under various assumptions on the damping and potential terms (cf., e.g., \cite{PS, CEG, AGP, Dao, HJ, Gao2} and references therein). Recently, assuming the nonlinearity $f$ is monotone and conservative, which is equivalent to the convexity of the potential, the authors in \cite{CF, May} proved that the solution converges weakly to an equilibrium if the damping coefficient behaves like $\frac{C}{t^{\alpha}}$ for some $C>0$ and $\alpha\in(0, 1)$. However, in the case of nonconvex force potential, the question of convergence to an equilibrium for the solution of the system (\ref{1}) is left open. The new estimates in this paper allow to solve this question and to propose necessary and sufficient conditions on the damping coefficient for convergence, see main theorem and remarks below.

Before giving the detailed statement of our main theorem, we make assumptions on the nonlinearity $f$ and the damping coefficient $h$:
\begin{itemize}
	\item[(I-1)] $f\in W^{1, \infty}_{\textrm{loc}}(\mathbb{R})$ satisfies
		\begin{eqnarray}
			\liminf\limits_{|s|\rightarrow+\infty}\frac{f(s)}{s}\geq-\mu_{1}, \label{2}
		\end{eqnarray}	
		where the constant $\mu_{1}<\mu_{0}$, and $\mu_{0}$ is the first eigenvalue of $-\Delta$ in $\Omega$ with zero Dirichlet boundary condition; and that 
		\begin{eqnarray}
			|f^{\prime}(s)| \leq C(1+|s|^{p}), \quad s\in \mathbb{R}, \label{3}
		\end{eqnarray}
		where $C>0$ and $p\geq 0$, with $(d-2)p<2$, are constants. 
	\item[(I-2)] Put 
			\[
				G(v):= -\Delta v +f(v), \quad E_{0}(v):=\frac{1}{2}\|\nabla v \|^{2}_{L^2} +\int_{\Omega}F(v)dx,
			\]
			where $F(s):=\int_{0}^{s}f(t)dt$, and
			\[
				S:=\{ \psi \in H^{2}(\Omega)\cap H^{1}_{0}(\Omega): G(\psi)=0\}
			\]
			(the set of equilibria). There exists a number $\theta \in (0, \frac{1}{2}]$ such that for each $\psi \in S$,
			\begin{eqnarray}
				\|G(v)\|_{H^{-1}}\geq c_{\psi}|E_{0}(v)-E_{0}(\psi)|^{1-\theta}, \label{4}
			\end{eqnarray}
			whenever $v\in H^{1}_{0}(\Omega)$, $\|v-\psi\|_{H^{1}}\leq \sigma_{\psi}$; here, $c_{\psi}$ and $\sigma_{\psi}$ are constants depending on $\psi$.
	\item[(I-3)] The damping coefficient $h(t)\in W_{\mathrm{loc}}^{1, \infty}(\mathbb{R}^{+})$ is a nonnegative function, and there exist constants $c, C>0$, and 
		\begin{eqnarray}
			\alpha \in [0, \theta(1-\theta)^{-1}) \label{5}
		\end{eqnarray}
		such that
		\begin{eqnarray}
			\frac{c}{(t+1)^{\alpha}}\leq h(t) \leq \frac{C}{(t+1)^{\alpha}}, \quad \forall t\geq 0, \label{6}
		\end{eqnarray}
		or
		\begin{eqnarray}
			ct^{\alpha} \leq h(t) \leq Ct^{\alpha}, \quad \forall t\geq 0. \label{7}
		\end{eqnarray} 
\end{itemize}
 In \cite{J1}, the authors proved the convergence result under the conditions (I-1), (I-2), (I-3) without (\ref{7}), and the following assumption (I-4).
\begin{itemize}
	\item[(I-4)] For any $a>0$,
		\begin{eqnarray*}
			\inf\limits_{t>0}\int_{t}^{t+a}h(s)ds>0. 
		\end{eqnarray*}
\end{itemize}
Condition (I-4) is a technical assumption, which is only used to show decay to zero of $u_t$ in $L^2$ and then the precompactness of the trajectories of system (\ref{1}). However, condition (I-4) implies that $h$ does not tend to $0$ at infinity. Stimulated by all the work above, the major contribution of this paper is to present an effective method to prove the convergence to equilibrium of solutions of system (\ref{1}) without the assumption (I-4). More precisely, we will prove the main theorem as follows.
\begin{theorem}\label{T1}
	Assume Conditions (I-1), (I-2) and (I-3). Let $u \in W^{1, \infty}_{\textrm{loc}}(\mathbb{R}^{+}, H^{1}_{0})\cap W^{2, \infty}_{\textrm{loc}}(\mathbb{R}^{+}, L^{2})$ be a solution of (\ref{1}). Then $\{(0, \psi): \psi\in S\}$ is the attracting set for system (\ref{1}), that is, 
	\[
		\lim_{t\rightarrow+\infty}(\| u_{t}(t, \cdot)\|_{L^2}+ \| u(t, \cdot)-\psi\|_{H^1})=0.
	\]
Moreover, there exist positive constants $c$, and $C$ such that
	\begin{eqnarray*} 
	\|u-\psi \|_{L^2} \leq
	\left
    	\{
        		\begin{array}{ll}
             		c(1+t)^{-\lambda}, & \theta\in(0, \frac{1}{2}),  \\
                           C\exp(-ct^{1-\alpha}),  & \theta = \frac{1}{2},
        \end{array}
\right.
\end{eqnarray*}
where $\lambda\in (0, \frac{\theta-(1-\theta)\alpha}{1-2\theta})$.	  
\end{theorem}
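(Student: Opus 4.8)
The plan is to run a Łojasiewicz--Simon argument, with the one genuinely new ingredient being a damping-weighted auxiliary functional that replaces the role of the discarded hypothesis (I-4). First I would introduce the total energy
\[
E(t)=\tfrac12\|u_t\|_{L^2}^2+E_0(u(t)),
\]
and record the dissipation identity $\frac{d}{dt}E(t)=-h(t)\|u_t\|_{L^2}^2\le 0$, obtained by pairing \eqref{1} with $u_t$. Hypothesis \eqref{2} together with $\mu_1<\mu_0$ and the Poincar\'e inequality makes $E_0$ coercive on $H^1_0$, so $E$ is bounded below and the orbit is bounded in $H^1_0$; the growth bound \eqref{3} and the regularity $u\in W^{2,\infty}_{\mathrm{loc}}(\mathbb{R}^+,L^2)$ upgrade this to a uniform $H^2$ bound, whence the compact embedding $H^2\cap H^1_0\hookrightarrow H^1_0$ gives precompactness of $\{u(t)\}$. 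Since $E$ is nonincreasing and bounded below, $E(t)\to E_\infty$, and a LaSalle-type argument identifies every element of the $\omega$-limit set as a point $(0,\psi)$ with $\psi\in S$ and $E_0(\psi)=E_\infty$. After this reduction I fix such a $\psi$, work in the neighborhood $\|u-\psi\|_{H^1}\le\sigma_\psi$ where \eqref{4} is available, and set $\mathcal{E}(t)=E(t)-E_0(\psi)\ge0$, so that $\dot{\mathcal{E}}=-h\|u_t\|_{L^2}^2$.

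The heart of the matter is to prove the single differential inequality
\[
\frac{d}{dt}\mathcal{E}(t)\le -c\,h(t)^{\frac{1-\theta}{\theta}}\,\mathcal{E}(t)^{\frac{1}{2\theta}},\qquad t\gg1.
\]
From \eqref{1} one has the pointwise identity $G(u)=-(u_{tt}+h(t)u_t)$, so $\|G(u)\|_{H^{-1}}\le C\bigl(\|u_{tt}\|_{H^{-1}}+h\|u_t\|_{L^2}\bigr)$, while \eqref{4} gives the reverse bound $|E_0(u)-E_0(\psi)|\le C\|G(u)\|_{H^{-1}}^{1/(1-\theta)}$. To convert the weak, possibly vanishing dissipation $h\|u_t\|^2$ into a coercive control of $\mathcal{E}^{1/(2\theta)}$ I would introduce the \emph{damping-weighted} auxiliary functional
\[
\mathcal{L}(t)=\mathcal{E}(t)+\varepsilon\,\eta(t)\,\langle u_t,\,u-\psi\rangle_{L^2},\qquad \eta(t)\sim h(t)^{-1}\sim(1+t)^{\alpha},
\]
with $\varepsilon$ small, so that $\mathcal{L}\sim\mathcal{E}$; differentiating the cross term via \eqref{1} produces the favorable contribution $-\varepsilon\eta\langle G(u),u-\psi\rangle$, which dominates the error terms $\varepsilon\eta\|u_t\|^2$, $\varepsilon\dot\eta\langle u_t,u-\psi\rangle$ and $\varepsilon\eta h\langle u_t,u-\psi\rangle$ precisely because $\eta$ is tuned to $h$. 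The \textbf{main obstacle} is exactly this balancing: combining the cross-term estimate with \eqref{4} and Young's inequality must reproduce the fractional power $h^{(1-\theta)/\theta}$ and the exponent $\tfrac{1}{2\theta}$, and it is here that the threshold \eqref{5}, i.e. $\alpha<\theta(1-\theta)^{-1}$, enters: it is exactly the condition that keeps $\frac{1-\theta}{\theta}\,\alpha<1$, so that the weight $\eta$ remains admissible and the resulting inequality is nondegenerate. (The case \eqref{7} is handled by the same functional with $\eta\sim h^{-1}\sim t^{-\alpha}$, reading off the corresponding rate from $\int_0^t h$.)

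Once the displayed inequality is in force, the conclusions follow by elementary integration. Writing $y=\mathcal{E}$ and $h\sim(1+t)^{-\alpha}$, for $\theta\in(0,\tfrac12)$ the exponent $\tfrac1{2\theta}>1$ forces $\frac{d}{dt}y^{1-\frac1{2\theta}}\ge c\,(1+t)^{-\frac{1-\theta}{\theta}\alpha}$, which integrates to $\mathcal{E}(t)\le c\,(1+t)^{-2\lambda}$ with $\lambda$ any number below $\tfrac{\theta-(1-\theta)\alpha}{1-2\theta}$; for $\theta=\tfrac12$ the inequality reduces to $\dot{\mathcal{E}}\le -c\,h\,\mathcal{E}$, giving $\mathcal{E}(t)\le C\exp\!\bigl(-c\int_0^t h\bigr)\le C\exp(-c\,t^{1-\alpha})$. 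Finally, the companion Łojasiewicz bound for the gradient flow yields a finite-length estimate $\int_t^\infty\|u_s\|_{L^2}\,ds\le C\,\mathcal{E}(t)^{1/2}$ (so in particular $\int_0^\infty\|u_s\|_{L^2}\,ds<\infty$); this makes $u(t)$ Cauchy in $L^2$, pins down a single limit $\psi$, forces $\|u_t\|_{L^2}\to0$, and, through $\|u(t)-\psi\|_{L^2}\le\int_t^\infty\|u_s\|_{L^2}\,ds\lesssim\mathcal{E}(t)^{1/2}$, transfers the decay of $\mathcal{E}$ into the stated polynomial and exponential rates for $\|u-\psi\|_{L^2}$, with $\lambda\in\bigl(0,\tfrac{\theta-(1-\theta)\alpha}{1-2\theta}\bigr)$. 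The positivity of this range is equivalent to \eqref{5}, which is why the condition on the damping coefficient is sharp.
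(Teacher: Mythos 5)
Your overall architecture (energy dissipation, compactness, a damping-weighted perturbation of the energy, a Łojasiewicz differential inequality, then a finite-length estimate) matches the paper's, but there are two genuine gaps at exactly the points where the nonconvexity and the removal of (I-4) matter. First, the auxiliary functional $\mathcal{L}(t)=\mathcal{E}(t)+\varepsilon\,\eta(t)\langle u_t,u-\psi\rangle_{L^2}$ is the one adapted to the \emph{convex} case of \cite{CF,May}: differentiating the cross term produces $-\varepsilon\eta\,\langle G(u)-G(\psi),u-\psi\rangle$, which is favorable only when $f$ is monotone. Under (I-1)--(I-2) this pairing has no sign (condition (\ref{2}) only constrains $f(s)/s$ at infinity, not $f'$), and the Łojasiewicz inequality (\ref{4}) bounds $\|G(u)\|_{H^{-1}}$ from below, not $\langle G(u),u-\psi\rangle$; so your central inequality $\dot{\mathcal{E}}\le -c\,h^{(1-\theta)/\theta}\mathcal{E}^{1/(2\theta)}$ is not actually derived. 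The paper instead perturbs by $\frac{\eta}{(t+1)^{\alpha}}\langle G(u),u_t\rangle_{H^{-1}}$, whose time derivative contains the genuinely good term $-\frac{\eta}{(t+1)^{\alpha}}\|G(u)\|_{H^{-1}}^{2}$ --- precisely the quantity that (\ref{4}) dominates by $|E_0(u)-E_0(\psi)|^{2(1-\theta)}$ --- leading to $H'(t)\le -C(1+t)^{-\alpha}H(t)^{2(1-\theta)}$ and the rates in (\ref{T6}). If you want to keep your route, you must either assume monotonicity of $f$ (defeating the purpose of the paper) or switch the cross term.

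Second, you dispose of the identification of the $\omega$-limit set with equilibria by ``a LaSalle-type argument.'' With $h(t)\to 0$ the dissipation identity only yields $\int_0^{\infty}h\|u_t\|_{L^2}^{2}\,dt<\infty$, which does not imply $\|u_t\|_{L^2}\to 0$ or $G(u)\to 0$ in $H^{-1}$; this is exactly the obstruction that hypothesis (I-4) was introduced to overcome, and removing it is the paper's stated main contribution. The paper's Step 2 handles it by writing $G(u)=G(u)-\int_t^{t+1}G(u)\,ds-(u_t(t+1,\cdot)-u_t(t,\cdot))-\int_t^{t+1}h(s)u_s\,ds$ and controlling each piece (Cauchy--Schwarz against the dissipation in case (\ref{6}); a Gronwall-type decay of $\|u_t\|_{H^{-1}}$ in case (\ref{7})). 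Two smaller points: a uniform-in-time $H^2$ bound does not follow from $u\in W^{2,\infty}_{\mathrm{loc}}(\mathbb{R}^+,L^2)$ (the paper only uses relative compactness in $H^{1-q}$), and the finite-length bound should read $\int_t^{\infty}\|u_s\|_{L^2}\,ds\lesssim H(t)^{\theta(1-\gamma)}$ rather than $\mathcal{E}(t)^{1/2}$; establishing that inequality for large $t$ (the paper's (\ref{T8})) is precisely where the threshold $\alpha<\theta(1-\theta)^{-1}$ of (\ref{5}) is consumed, not merely in keeping the weight $h^{(1-\theta)/\theta}$ integrable.
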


\begin{remark}
It follows from \cite{HJBook} that (I-2) holds true if $f$ is analytic. For example, the analytic function $f(u)=b\sin u$, $b$ some positive constant, with Lojasiewicz exponent $\theta=\frac{1}{2}$. Then equation (\ref{1}) is a damped sine-Gordon equation. 

As we know in \cite{Chill}, (I-2) is also suitable for some non-analytic functions, for instance, 
\[
	f(u) =au+|u|^{p-1}u, \quad p>1.
\]
And if $a>-\mu_0$, then (I-2) is satisfied with the Lojasiewicz exponent $\theta=\frac{1}{2}$; otherwise, (I-2) holds with $\theta=\frac{1}{p+1}$. Thus, equation (\ref{1}) becomes a damped nonlinear Klein-Gordon equation.
\end{remark}

\begin{remark}
The condition (I-3) on the damping coefficient is optimal, which prevent the damping term from being either too small, or too large as $t\rightarrow +\infty$.

From Theorem \ref{T1}, we obtain the convergence results for equation (\ref{1}) with a small damping coefficient $h(t)\sim \frac{1}{(t+1)^{\alpha}}$, asymptotically vanishing, or a large damping coefficient $h(t)\sim t^{\alpha}$, $\alpha\in[0, \theta(1-\theta)^{-1})$. Here, the notation $\sim$ means that the coefficient grows like a polynomial function.

As for $h(t)= (t+1)^\alpha$, $|\alpha|>1$, Theorem \ref{T1} do not apply, and the solution $u(t, x)$ may oscillate or approach to some functions (not an equilibrium) as time goes to infinity.

Indeed, if $\alpha>1$, there exist oscillating solution that do not approach zero as $t\rightarrow \infty$. 
For example, we consider the problem
\begin{eqnarray*}
\left
    \{
        \begin{array}{ll}
            u_{tt}-\Delta u+ \frac{1}{(t+1)^{\alpha}}u_t +b u=0 &(t, x)\in\mathbb{R}^{+} \times \Omega,  \\
                                u = 0  & (t, x)\in \mathbb{R}^{+} \times \partial\Omega,\\
                  (u(0, x), u_t(0, x))= (u_{0}(x), u_{1}(x)) & x\in \Omega,
        \end{array}
\right.
\end{eqnarray*}
where $\mu$ is an eigenvalue of $-\Delta +b $, having the corresponding eigenfunction $\psi(x)$. Taking $u(t, x)=\omega(t)\psi(x)$, we have
\[
	\omega_{tt} + \frac{1}{(t+1)^{\alpha}}\omega_{t}+\mu\omega=0.
\]

When  $\alpha <-1$, solutions again do not in general approach zero, though their behavior is quite different from the case $\alpha<-1$. Note that an interesting solution
\[
	u(t, x)=(1-\frac{\mu(t+1)^{1+\alpha}}{1+\alpha})\psi(x)
\]
solves
\begin{eqnarray*}
\left
    \{
        \begin{array}{ll}
            u_{tt}-\Delta u+ h(t) u_t +b u=0 &(t, x)\in\mathbb{R}^{+} \times \Omega,  \\
                                u = 0  & (t, x)\in \mathbb{R}^{+} \times \partial\Omega,
        \end{array}
\right.
\end{eqnarray*}
where the damping coefficient is as follows
\[
	h(t)=t^{-\alpha}-\frac{\mu}{1+\alpha}t-\alpha t^{-1},
\]
and the initial data 
\[
	(u(0, x), u_t(0, x))= (\frac{1+\alpha-\mu}{1+\alpha}\psi(x), -\frac{\mu}{1+\alpha}\psi(x)), \quad x\in \Omega.
\]
But it is easy to see that $u(t, x)$ approaches to $\psi(x)$, which is not an equilibrium, as $t\rightarrow \infty$.
\end{remark}

The plan of this paper is as follows. In Section 2, we make some estimates of solutions. A key lemma in this section is to prove a generalized type of Lojasiewicz-Simon inequality, which is firstly given in the literature. In Section 3, we prove our main results. And we will give some numerical analysis to illustrate our results in Section 4. Throughout the paper, $c$, $c_1$, $c_2\cdots$, $C$, $C_1$, $C_2\cdots$ denote corresponding constants.

\section{Preliminary}
By the semigroup theory (e.g., \cite{Pazy}, section 6.1) and regard $h(t)u_{t}+f(u)$ as a perturbation term), we know that
under Condition (\ref{3}) and for $u_{0} \in H^{2}(\Omega)\cap H^{1}_{0}(\Omega)$ and $u_{1} \in H^{1}_{0}(\Omega)$, Problem (\ref{1}) has a unique solution
\[
	u(t, x) \in W^{1, \infty}_{\textrm{loc}}(\mathbb{R}^{+}, H^{1}_{0}(\Omega))\cap W^{2, \infty}_{\textrm{loc}}(\mathbb{R}^{+}, L^{2}(\Omega)).
\]
The solution energy is defined by
\[
	E_{u}(t):=\frac{1}{2}(\|u_{t}\|^{2}_{L^2}+\|\nabla u\|^{2}_{L^2})+\int_{\Omega}F(u)dx.
\]
Notice that $E_{u}(\cdot)$ is non-increasing by $E_{u}^{\prime}(t)=-h(t)\|u_{t}\|^{2}_{L^2}\leq 0$.
\begin{lemma} \label{L1}
Let $u\in W^{1, \infty}_{\mathrm{loc}}(\mathbb{R}^{+}, H^{1}_{0})\bigcap W^{2, \infty}_{\mathrm{loc}}(\mathbb{R}^{+}, L^{2})$
be a solution to (\ref{1}). There is a positive constant $c$ depending only on the norm of initial data in the energy space $H^1_{0}(\Omega)\times L^2(\Omega)$, such that
\begin{eqnarray}
	\|u_t\|_{L^2}+\|u\|_{H^1}+\|f(u)\|_{L^2} \leq c \quad \textrm{for $t\geq 0$}. \label{L1-1}
\end{eqnarray}
And the functional $E_{0}(u)$ has at least a minimizer $v\in H^{1}(\Omega)$.
\end{lemma}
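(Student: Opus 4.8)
The plan is to derive both assertions from two ingredients: the monotonicity of the solution energy $E_u(t)$ already recorded above, and a lower bound on the potential $F$ that is hidden in Condition (I-1). The crucial algebraic fact is that \eqref{2} forces $F$ to be bounded below by a negative multiple of $s^2$ whose coefficient is \emph{strictly} smaller than the spectral gap $\mu_0$; this is exactly what allows the (possibly negative) potential to be absorbed into the Dirichlet energy through the Poincar\'e inequality.

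First I would extract the quantitative lower bound on $F$. From \eqref{2}, for every $\varepsilon>0$ there is $M>0$ with $f(s)\geq-(\mu_1+\varepsilon)s$ for $s\geq M$ and the mirror inequality for $s\leq-M$; integrating these and using that $F$ is continuous (hence bounded on $[-M,M]$ thanks to \eqref{3}) yields a constant $C_\varepsilon$ with $F(s)\geq-\tfrac{1}{2}(\mu_1+\varepsilon)s^2-C_\varepsilon$ for all $s$, whence $\int_\Omega F(u)\,dx\geq-\tfrac{1}{2}(\mu_1+\varepsilon)\|u\|_{L^2}^2-C_\varepsilon|\Omega|$. Choosing $\varepsilon$ so small that $\mu_1+\varepsilon<\mu_0$ and inserting the variational inequality $\|\nabla u\|_{L^2}^2\geq\mu_0\|u\|_{L^2}^2$ gives
\[
E_u(t)\geq\tfrac{1}{2}\|u_t\|_{L^2}^2+\tfrac{1}{2}\Big(1-\tfrac{\mu_1+\varepsilon}{\mu_0}\Big)\|\nabla u\|_{L^2}^2-C_\varepsilon|\Omega|,
\]
with $\delta:=1-(\mu_1+\varepsilon)/\mu_0>0$. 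Since $E_u(t)\leq E_u(0)$, this bounds $\|u_t\|_{L^2}$ and $\|\nabla u\|_{L^2}$, hence $\|u\|_{H^1}$ by Poincar\'e, by a constant depending only on the initial energy. To control $\|f(u)\|_{L^2}$ I would integrate \eqref{3} to obtain $|f(s)|\leq C(1+|s|^{p+1})$ and then use the Sobolev embedding $H^1_0(\Omega)\hookrightarrow L^{2(p+1)}(\Omega)$, valid precisely because $(d-2)p<2$; the already-established $H^1$ bound then closes \eqref{L1-1}.

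For the second assertion I would run the direct method on $E_0$. Coercivity and boundedness below follow from the very same estimate, since $E_0(v)\geq\tfrac{\delta}{2}\|\nabla v\|_{L^2}^2-C_\varepsilon|\Omega|\to+\infty$ as $\|v\|_{H^1}\to\infty$. Taking a minimizing sequence $\{v_n\}$, coercivity yields a uniform $H^1_0$ bound, so along a subsequence $v_n\rightharpoonup v$ weakly in $H^1_0(\Omega)$. The Dirichlet term $\tfrac{1}{2}\|\nabla\,\cdot\,\|_{L^2}^2$ is convex and continuous, hence weakly lower semicontinuous. For the potential I would invoke the compact Rellich--Kondrachov embedding $H^1_0(\Omega)\hookrightarrow\hookrightarrow L^{p+2}(\Omega)$, again available because $(d-2)p<2$ makes $p+2$ subcritical, to get $v_n\to v$ strongly in $L^{p+2}$; since integrating \eqref{3} also gives $|F(s)|\leq C(1+|s|^{p+2})$, the Nemytskii operator $v\mapsto F(v)$ is continuous from $L^{p+2}(\Omega)$ into $L^1(\Omega)$, so $\int_\Omega F(v_n)\,dx\to\int_\Omega F(v)\,dx$. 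Combining, $E_0(v)\leq\liminf_n E_0(v_n)=\inf E_0$, so $v$ is the desired minimizer.

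The main obstacle is the first step: because $F$ is nonconvex and may be unbounded below in isolation, coercivity of the energy is not automatic and rests entirely on the quantitative gap $\mu_1<\mu_0$ in \eqref{2}, exploited through the \emph{sharp} Poincar\'e constant $\mu_0$; a lossy application of the eigenvalue inequality would destroy the positivity of $\delta$ and the whole argument. The remaining delicate point is the passage to the limit in $\int_\Omega F(v_n)\,dx$, where the subcritical growth $(d-2)p<2$ is indispensable, as it is what upgrades weak $H^1$ convergence to the strong $L^{p+2}$ convergence required to handle the nonlinear term.
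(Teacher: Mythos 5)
Your proposal is correct, and the first half coincides with the paper's argument: both extract from (\ref{2}) the lower bound $F(s)\geq-\tfrac12(\mu_1+\delta)s^2-C_\delta$, absorb the negative quadratic term into the Dirichlet energy via the sharp Poincar\'e constant $\mu_0$, and conclude from the monotonicity of $E_u$; the bound on $\|f(u)\|_{L^2}$ via $|f(s)|\leq C(1+|s|^{p+1})$ and the subcritical Sobolev embedding is also what the paper intends when it says this follows ``by (I-1)''. Where you genuinely diverge is the existence of a minimizer. The paper takes a minimizing sequence and invokes Aubin's compactness theorem to get relative compactness in $L^\infty([0,T],H^{1-q})$, then checks continuity of $v\mapsto\int_\Omega F(v)\,dx$ along the $H^{1-q}$-convergent subsequence; this is awkward, since Aubin--Lions is a statement about time-dependent functions and a minimizing sequence of the static functional $E_0$ carries no time variable, so the paper's compactness step is at best loosely justified. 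Your route --- coercivity from the same energy lower bound, weak $H^1_0$ compactness, weak lower semicontinuity of the convex Dirichlet term, and strong convergence in $L^{p+2}$ by Rellich--Kondrachov (available since $(d-2)p<2$ makes $p+2$ subcritical) to pass to the limit in the Nemytskii term --- is the standard direct method and is cleaner and fully rigorous. One caveat worth noting against the statement rather than your proof: the lemma asserts a minimizer over $H^1(\Omega)$, but coercivity genuinely requires the Poincar\'e inequality and hence the space $H^1_0(\Omega)$ (over all of $H^1$ the constants alone can drive $E_0$ to $-\infty$ when $F$ is quadratically unbounded below); your restriction to $H^1_0$ is the correct reading.
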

\begin{proof}
For every $\delta>0$, it follows from (\ref{2}) that there exists $M(\delta)\gg 1$ such that for $|s|>M(\delta)$ 
\[
	\frac{f(s)}{s}\geq -(\mu_{1}+\delta),
\]
and so
\[
	F(s)\geq -\frac{\mu_{1}+\delta}{2}s^2.
\]
Then we have
\begin{eqnarray*}
\begin{aligned}	
	\int_{\Omega}F(u)dx =&\int_{|u|>M(\delta)}F(u)dx+\int_{|u|\leq M(\delta)}F(u)dx \\
			                \geq& -\frac{\mu_{1}+\delta}{2}\int_{\Omega}|u|^2 dx+|\Omega|\sup_{|u|\leq M(\delta)}|F(s)|,
\end{aligned}			   
\end{eqnarray*}
which implies that
\begin{eqnarray}
\begin{aligned}	 \label{L1-2}
	E_{u}(t)\geq \frac{1}{2}\|u_{t}\|^{2}_{L^2}+ \frac{1}{2}\big(1-\frac{\mu_{1}+\delta}{\mu_{0}} \big)\|\nabla u\|^{2}_{L^2}+|\Omega|\sup_{|u|\leq M(\delta)}|F(s)|.
\end{aligned}			   
\end{eqnarray}
Since $E_{u}(\cdot)$ is non-increasing, then by using the Poincar\'e inequality, it implies from (\ref{L1-2}) that
\[
	\|u_{t}\|^{2}_{L^2}+ \| u\|^{2}_{H^1}\leq c_1,
\]
where $c_1$ depends on the norm of initial data in the energy space. Accordingly, $\|f(u)\|_{L^2}$ is also bounded by (I-1). Then (\ref{L1-1}) is proved.

And also we know from (\ref{L1-2}) that $E_0(u)$ is bounded below. Thus, there exists a minimizing sequence $u_n\in H^{1}(\Omega)$ such that $E_{0}(u_n)=\inf_{u\in H^{1}} E_{0}(u)$. By (\ref{3}) H\"older inequality and Sobolev inequalities, we have
\begin{eqnarray*}
\begin{aligned}
	\|f(u_n)-f(v)\|^2_{L^2}&\leq c_2\int_{\Omega}(|u_n|^{2p}+|v|^{2p}+1)|u_n-v|^2 dx \\
					&\leq c_3 \|(|u_n|^{2p}+|v|^{2p}+1)\|_{L^{\frac{3p}{1-q}}}\||u_n-v|^2\|_{L^{\frac{6}{1+2q}}} \\
					&\leq c_4( \|u_n\|_{H^{1-q}}^{2p}+\|v\|_{H^{1-q}}^{2p}+1)\|u_n-v\|^2_{H^{1-q}}
\end{aligned}	
\end{eqnarray*}
with $q=\frac{2-p}{2(1+p)}\in(0, 1)$. 
From (\ref{L1-1}) and the Aubin's compactness theorem, we know that $u_n$ is relatively compact in $L^{\infty}([0, T], H^{1-q}(\Omega))$. Then there is a subsequence, denoted still by $u_n$, such that $u_n \rightarrow v $ in $H^{1-q}$, and $v\in H^{1}$. 
It implies
\begin{eqnarray*}
\begin{aligned}
	|\int_{\Omega}F(u_n)-F(v)dx| &= |\int_{\Omega}[\int_{0}^{1}f(su_n)u_n-f(sv)v ds]dx|\\
						&=|\int_{\Omega}[\int_{0}^{1}f(su_n)u_n-f(su_n)v+f(su_n)v-f(sv)v ds]dx|\\
						&\leq c_5\{\|u_n -v\|_{L^2}+ \|f(su_n) -f(sv)\|_{L^2} \}\rightarrow 0
\end{aligned}
\end{eqnarray*}
as $n$ goes to infinity. Because $\|u\|_{H^1}$ is weakly lower semicontinuous, we obtain $E_{0}(v)=\inf_{u\in H^{1}} E_{0}(u)$, that is, $v$ is a minimizer of the functional $E_{0}(u)$.
\qed
\end{proof}

\begin{remark}
From this Lemma, we know the following static problem associated to system (\ref{1})
\begin{eqnarray*}
\left
    \{
        \begin{array}{ll}
            -\Delta \psi+f(\psi)=0 & x\in  \Omega,  \\
                                \psi= 0  & x\in \partial\Omega
        \end{array} 
\right.
\end{eqnarray*}
admits at least a classical solution, which means that the set of equilibria is nonempty.

\end{remark}

The following inequality is a generalized type of Simon-Lojasiewicz inequality. 
\begin{lemma} \label{L2}
Assume that $u\in W^{1, \infty}_{\mathrm{loc}}(\mathbb{R}^{+}, H^{1}_{0})\bigcap W^{2, \infty}_{\mathrm{loc}}(\mathbb{R}^{+}, L^{2})$
is a solution to (\ref{1}) and $\psi\in S$. There exist constants $c>0$, $T>0$, $\theta\in(0, \frac{1}{2}]$ and $\beta>0$ depending on $\psi$ such that for
	\begin{eqnarray*}
		\|G(u)\|_{H^{-1}}\geq c|E_{0}(u)-E_{0}(\psi)|^{1-\theta}
	\end{eqnarray*}
	provided $\|u-\psi\|_{H^{1-q}(\Omega)}<\beta$, $q=\frac{2-p}{2(1+p)}$. 
\end{lemma}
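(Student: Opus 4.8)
The goal is to upgrade the Lojasiewicz--Simon inequality (\ref{4}), which holds only on the $H^1$-ball $\{\|v-\psi\|_{H^1}\le\sigma_\psi\}$, to the larger but weaker $H^{1-q}$-ball $\{\|u-\psi\|_{H^{1-q}}<\beta\}$. The difficulty is that $\|\cdot\|_{H^{1-q}}$ does not control $\|\nabla(u-\psi)\|_{L^2}$, so closeness in $H^{1-q}$ alone does not place $u$ inside the region where (\ref{4}) is available. My plan is to exploit that, along the flow, $u$ stays bounded in $H^1$ by Lemma \ref{L1}, and to split the $H^{1-q}$-ball into the part that is genuinely $H^1$-close to $\psi$ (handled by (\ref{4})) and the part that is $H^1$-far (handled by a direct lower bound on $\|G(u)\|_{H^{-1}}$).

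The key observation is that since $G(\psi)=-\Delta\psi+f(\psi)=0$, we may write $G(u)=-\Delta(u-\psi)+\big(f(u)-f(\psi)\big)$. Because $\|{-}\Delta w\|_{H^{-1}}=\|\nabla w\|_{L^2}$ for $w\in H^1_0$ and $\|\cdot\|_{H^{-1}}\le C\|\cdot\|_{L^2}$, this gives the lower bound
\[
\|G(u)\|_{H^{-1}}\ \ge\ \|\nabla(u-\psi)\|_{L^2}-C\,\|f(u)-f(\psi)\|_{L^2}.
\]
Now the nonlinear discrepancy is only of $H^{1-q}$-order: the estimate established inside the proof of Lemma \ref{L1}, together with the uniform bound (\ref{L1-1}), yields $\|f(u)-f(\psi)\|_{L^2}\le C'\|u-\psi\|_{H^{1-q}}\le C'\beta$. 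The same two ingredients, after writing $F(u)-F(\psi)=\int_0^1 f(\psi+s(u-\psi))(u-\psi)\,ds$ and using $-\Delta\psi=-f(\psi)$ to cancel the first-order term, give the energy expansion $E_0(u)-E_0(\psi)=\tfrac12\|\nabla(u-\psi)\|_{L^2}^2+R$ with $|R|\le C''\|u-\psi\|_{H^{1-q}}^2\le C''\beta^2$. Thus both the mismatch in $G$ and the mismatch of $E_0$ from its quadratic leading part are controlled purely by the weak norm.

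With these estimates the dichotomy closes. On region A, where in addition $\|u-\psi\|_{H^1}\le\sigma_\psi$, inequality (\ref{4}) applies verbatim. On region B, where $\|u-\psi\|_{H^1}>\sigma_\psi$ but $\|u-\psi\|_{H^{1-q}}<\beta$, note $\|u-\psi\|_{L^2}\le\|u-\psi\|_{H^{1-q}}<\beta$, so $\|\nabla(u-\psi)\|_{L^2}^2>\sigma_\psi^2-\beta^2$; choosing $\beta$ small forces $\|G(u)\|_{H^{-1}}\ge\tfrac12\sigma_\psi>0$, while $|E_0(u)-E_0(\psi)|^{1-\theta}$ is bounded above by a fixed constant because $E_0(u)$ is bounded via (\ref{L1-1}). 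Hence the ratio $\|G(u)\|_{H^{-1}}/|E_0(u)-E_0(\psi)|^{1-\theta}$ is bounded below on region B (the case $E_0(u)=E_0(\psi)$ being trivial, as the right-hand side vanishes). Taking $c$ to be the minimum of $c_\psi$ from (\ref{4}) and the region-B constant completes the proof.

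The main obstacle—and the real content of the lemma—is the lower bound in the second paragraph: one must recognize that the leading part $-\Delta(u-\psi)$ of $G(u)$ already detects the full $H^1$-gap $\|\nabla(u-\psi)\|_{L^2}$, whereas the nonlinear remainder $f(u)-f(\psi)$ is only $H^{1-q}$-order and hence negligibly small on the $\beta$-ball. This is precisely what lets the weak $H^{1-q}$ hypothesis substitute for the strong $H^1$ hypothesis of (\ref{4}); the subcritical growth condition (\ref{3}) (equivalently $q\in(0,1)$) is what makes the remainder subordinate. As a cross-check I would also note a compactness alternative: if the inequality failed there would be solution values $u_n\to\psi$ in $H^{1-q}$ with $\|G(u_n)\|_{H^{-1}}<\tfrac1n|E_0(u_n)-E_0(\psi)|^{1-\theta}$; the bound (\ref{L1-1}) gives $u_n\rightharpoonup\psi$ weakly in $H^1$, and the same decomposition forces $\|\nabla(u_n-\psi)\|_{L^2}\to0$, i.e. strong $H^1$-convergence, so (\ref{4}) eventually applies and yields a contradiction.
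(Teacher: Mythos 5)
Your proposal is correct and follows essentially the same route as the paper: a dichotomy on whether $u$ lies in the $H^1$-ball where the Lojasiewicz inequality (\ref{4}) applies, and, in the complementary region, the decomposition $G(u)=-\Delta(u-\psi)+(f(u)-f(\psi))$ together with the subcritical bound $\|f(u)-f(\psi)\|_{L^2}\le C\|u-\psi\|_{H^{1-q}}$ and the uniform energy bound of Lemma \ref{L1} to get a direct lower bound on $\|G(u)\|_{H^{-1}}$ against the bounded quantity $|E_0(u)-E_0(\psi)|^{1-\theta}$. The only cosmetic difference is that the paper invokes elliptic regularity $\|z\|_{H^1}\le c_1\|\Delta z\|_{H^{-1}}$ where you use the isometry $\|{-}\Delta w\|_{H^{-1}}=\|\nabla w\|_{L^2}$, which amounts to the same estimate.
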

\begin{proof}
If $\|u-\psi\|_{H^{1}(\Omega)}\geq\beta$, then for $z=u-\psi$, we have
\begin{eqnarray*}
\left
    \{
        \begin{array}{ll}
            -\Delta z=u_{tt}+h(t)u_t +f(u)-f(\psi)& x\in  \Omega,  \\
                       z= 0  & x\in \partial\Omega
        \end{array} 
\right.
\end{eqnarray*}
From the regularity theory for elliptic problem we know
\begin{eqnarray}
	\|z\|_{H^1}\leq c_1\|\Delta z\|_{H^{-1}}, \label{L2-1}
\end{eqnarray}
where $c_1$ is a constant independent of $u$. From (\ref{3}), we have
\begin{eqnarray*}
\begin{aligned}
	\|f(u)-f(\psi)\|_{L^2} \leq c_2\big( \|u\|_{H^{1-q}}^{p}+\|\psi\|_{H^{1-q}}^{p}+1 \big)\|u-\psi\|_{H^{1-q}}.
\end{aligned}	
\end{eqnarray*}
There exists $\tilde{\beta}>0$ depending on $\psi$ such that for any $v$, $\|u-\psi\|_{H^{1-q}}<\tilde{\beta}$,
\begin{eqnarray}
	\|f(u)-f(\psi)\|_{L^2}<\frac{\beta}{2c_1}. \label{L2-2}
\end{eqnarray}
Then it infers from (\ref{L2-1}), (\ref{L2-2}) and Lemma \ref{L1} that
\begin{eqnarray*}
\begin{aligned}
	\|-\Delta u+f(u)\|_{H^{-1}} &= \|-\Delta z+f(u)-f(\psi)\|_{H^{-1}}\\
					      &\geq \|\Delta z\|_{H^{-1}}-\|f(u)-f(\psi)\|_{H^{-1}}\\
					      &\geq \frac{1}{c_1}\|z\|_{H^{1}}-\|f(u)-f(\psi)\|_{H^{-1}}\\
					      &\geq \frac{1}{2c_1}\|z\|_{H^{1}}\\
					     &\geq  c_3|E_{0}(u)-E_{0}(\psi)|^{1-\theta}.
\end{aligned}
\end{eqnarray*}
For the other case $\|v-\psi\|_{H^{1}(\Omega)}<\beta$, it follows from (I-2) that the estimate in this Lemma holds. Thus, our proof is completed. \qed
\end{proof}

\section{Main Result}
In this section, we give the proof of Theorem \ref{T1}.
\begin{proof}
The subsequent proof consists of several steps.
\newline
\textbf{Step 1.} 
From (\ref{L1-1}) and the Aubin's compactness theorem, we know that $u$ is relatively compact in $L^{\infty}([0, T], H^{1-q}(\Omega))$. It follows that there exist a sequence $t_n\rightarrow +\infty$ and $\psi\in H^{1-q}$ such that
\begin{eqnarray}
	\|u(t_n, \cdot)-\psi\|_{H^{1-q
	}} \rightarrow 0, \quad n \rightarrow+\infty.
\end{eqnarray}
Let $\eta \in (0, 1)$, we define
\begin{eqnarray*}
\begin{aligned}
	H(t)=E_{u}(t)-E_{0}(\psi)+ \frac{\eta}{(t+1)^{\alpha}}\langle G(u), u_t \rangle,
\end{aligned}
\end{eqnarray*}
where $\langle\cdot, \cdot\rangle$ is $H^{-1}$ inner product. 
Then we have
\begin{eqnarray*}
\begin{aligned}
	H^{\prime}(t) =&-h(t)\|u_t\|^{2}_{L^2}-\frac{\eta\alpha}{(t+1)^{\alpha+1}}\langle G(u), u_t\rangle+\frac{\eta}{(t+1)^{\alpha}}\langle G^{\prime}(u)u_t, u_t \rangle\\
	&-\frac{\eta}{(t+1)^{\alpha}}\|G(u)\|_{H^{-1}}-\frac{\eta}{(t+1)^{\alpha}}\langle G(u), h(t)u_t \rangle.
\end{aligned}
\end{eqnarray*}
If $h(t)$ satisfies (\ref{6}), then we obtain
\begin{eqnarray*}
\begin{aligned}
	H^{\prime}(t) 
	\leq &-\frac{c}{(t+1)^{\alpha}}\|u_t\|^{2}_{L^2}-\frac{\eta}{(t+1)^{\alpha}}\|G(u)\|_{H^{-1}}+\frac{\eta}{(t+1)^{\alpha}}\langle G^{\prime}(u)u_t, u_t \rangle\\
	&+(\frac{\eta\alpha}{(t+1)^{\alpha+1}}+\frac{\eta C}{(t+1)^{2\alpha}})|\langle G(u), u_t\rangle|.
\end{aligned}
\end{eqnarray*}
And when $h(t)$ satisfies (\ref{7}), then we obtain
\begin{eqnarray*}
\begin{aligned}
	H^{\prime}(t) 
	\leq &-ct^{\alpha}\|u_t\|^{2}_{L^2}-\frac{\eta}{(t+1)^{\alpha}}\|G(u)\|_{H^{-1}}+\frac{\eta}{(t+1)^{\alpha}}\langle G^{\prime}(u)u_t, u_t \rangle\\
	&+(\frac{\eta\alpha}{(t+1)^{\alpha+1}}+\frac{\eta Ct^{\alpha}}{(t+1)^{\alpha}})|\langle G(u), u_t\rangle|\\
	\leq &-ct^{\alpha}\|u_t\|^{2}_{L^2}-\frac{\eta}{(t+1)^{\alpha}}\|G(u)\|_{H^{-1}}+\frac{\eta}{(t+1)^{\alpha}}\langle G^{\prime}(u)u_t, u_t \rangle\\
	&+(\frac{\eta\alpha}{(t+1)^{\alpha+1}}+\eta C)|\langle G(u), u_t\rangle|.
\end{aligned}
\end{eqnarray*}
Since we know
\[
	|\langle G(u), u_t\rangle|\leq \|G(u)\|_{H^{-1}}\|u_t\|_{H^{-1}}\leq \frac{1}{2}\|G(u)\|^{2}_{H^{-1}}+\frac{1}{2}\|u_t\|^{2}_{L^{2}},
\]
or
\[
	|\langle G(u), u_t\rangle|\leq \|G(u)\|_{H^{-1}}\|u_t\|_{H^{-1}}\leq \frac{C_1}{2(1+t)^{\alpha}}\|G(u)\|^{2}_{H^{-1}}+\frac{t^{\alpha}}{2C_2}\|u_t\|^{2}_{L^{2}},
\]
moreover, 
\[
	\|(-\Delta)^{-1}(f^{\prime}(u))u_t\| \leq C_3(1+\|u\|_{H^1})\|u_t\|_{L^2}
\]
by (\ref{3}), and so
\[
	|\langle G^{\prime}(u)u_t, u_t\rangle|\leq C_4\|u_t\|^{2}_{L^2},
\]
then it implies from choosing $\eta$ small enough that
\begin{eqnarray}
\begin{aligned}
	H^{\prime}(t) \leq-\frac{C_5}{(t+1)^{\alpha}}(\|u_t\|^{2}_{L^2}+\|G(u)\|^{2}_{H^{-1}}) \label{T2}
\end{aligned}
\end{eqnarray}
regardless of whether $h(t)$ satisfies (\ref{6}) or (\ref{7}). And (\ref{T2}) implies that $H(t)$ is non-increasing. It follows that $H(t)$ has a finite limit as time goes to $+\infty$. 
\newline
\textbf{Step 2.}
Due to the Lemma \ref{L1}, $u$ is weakly compact in $H^1$. And then we note that 
\begin{eqnarray*}
\begin{aligned}
	&\|G(u(t, \cdot))-G(u(s, \cdot))\|_{H^{-1}}\\
	&= \sup_{\zeta\in Z}\{|\langle G(u(t, \cdot))-G(u(s, \cdot)), \zeta \rangle|\}\\
	&\leq \sup_{\zeta\in Z}\{|\langle \Delta u(t, \cdot)-\Delta u(s, \cdot), \zeta \rangle|\}+\sup_{\zeta\in Z}\{|\langle  f(u(t, \cdot))-f(u(s, \cdot)), \zeta \rangle|\}\\
	&\rightarrow 0, \quad t\rightarrow \infty.
\end{aligned}
\end{eqnarray*}
where $Z=\{\zeta\in H^{1}_{0}: \|\zeta\|_{H^1}=1\}$.
Then we have
\begin{eqnarray} \label{LD2}
\begin{aligned}
	\|G(u)-\int_{t}^{t+1}G(u)ds \|_{H^{-1}} &\leq \int_{t}^{t+1}\|G(u(t, \cdot))-G(u(s, \cdot)) \|_{H^{-1}}ds\\
								&\rightarrow 0, \quad t\rightarrow +\infty.
\end{aligned}
\end{eqnarray}
Because $\|u_t\|_{H^{-1}}$ is uniformly continuous, we deduce that 
\begin{eqnarray}
\|u_t(t+1, \cdot)-u_t(t, \cdot) \|_{H^{-1}}\rightarrow 0, \quad t\rightarrow +\infty. \label{LD3}
\end{eqnarray}
If $h(t)$ satisfies (\ref{6}), we have
\begin{eqnarray}  \label{LD4}
\begin{aligned}
	 \|\int_{t}^{t+1}h(s)u_sds\|_{H^{-1}} &\leq  \|\int_{t}^{t+1}h(s)u_sds\|_{L^{2}}\leq  \int_{t}^{t+1}h(s)\|u_s\|_{L^{2}}ds\\
	 							&\leq (\int_{t}^{t+1}h(s)ds)^{\frac{1}{2}}(\int_{t}^{t+1}h(s)\|u_s\|^{2}_{L^{2}}ds)^{\frac{1}{2}}\\
	 							&\leq C_6(\int_{t}^{t+1}h(s)\|u_s\|^{2}_{L^{2}}ds)^{\frac{1}{2}}\\
								&\leq C_6(E(t)-E(t+1))^{\frac{1}{2}}\rightarrow 0, \quad t\rightarrow +\infty.
\end{aligned}
\end{eqnarray}
Let $J(t) = \frac{\|u_t\|^{2}_{H^{-1}}}{(t+1)^{\alpha}}$. We know 
\begin{eqnarray*}
\begin{aligned}
	\frac{d}{dt}J(t)&=-\frac{\alpha}{t+1}J(t)+\frac{2}{(t+1)^{\alpha}}\langle u_t, u_{tt}\rangle\\
					&=-\frac{\alpha}{t+1}J(t)-2h(t)J(t)+\frac{2}{(t+1)^{\alpha}}\langle u_t, -G(u)\rangle\\
					&\leq -(2h(t)-1+\frac{\alpha}{t+1})J(t)+\frac{1}{(t+1)^{\alpha}}\|G(u)\|^{2}_{H^{-1}}\\
					&\leq -(2h(t)-1+\frac{\alpha}{t+1})J(t)-\frac{1}{C_5}H^{\prime}(t)
\end{aligned}
\end{eqnarray*}
by (\ref{T2}). Note that 
\[
	\sup_{t\geq 0}\int_{t}^{t+1}H^{\prime}(s)ds =0,
\]
and for $t>1$, $2h(t)-1+\frac{\alpha}{t+1}>0$ if h(t) satisfies (\ref{7}). Then it follows from a Grownwall type inequality (see Lemma 2.2 in \cite{GP}) that 
\begin{eqnarray*}
	\|u_t\|^{2}_{H^{-1}}\leq C_7(t+1)^{\alpha}e^{-\epsilon t}, \quad t>1.
\end{eqnarray*}
Furthermore, h(t) satisfies (\ref{7}), we have
\begin{eqnarray}  \label{LD5}
\begin{aligned}
	 \|\int_{t}^{t+1}h(s)u_sds\|_{H^{-1}} &\leq  \|\int_{t}^{t+1}h(s)u_sds\|_{L^{2}}\leq  \int_{t}^{t+1}h(s)\|u_s\|_{L^{2}}ds\\
								&\rightarrow 0, \quad t\rightarrow +\infty.
\end{aligned}
\end{eqnarray}

Now we examine the term $\| G(u) \|_{H^{-1}}$. Since we know by (\ref{1}) that
\begin{eqnarray*}
\begin{aligned}
	G(u) &=G(u)-\int_{t}^{t+1}G(u)ds-\int_{t}^{t+1}(u_{ss}+h(s)u_s )ds\\
		&=G(u)-\int_{t}^{t+1}G(u)ds-(u_t(t+1, \cdot)-u_t(t, \cdot))-\int_{t}^{t+1}h(s)u_sds
\end{aligned}
\end{eqnarray*}
then we have
\begin{eqnarray*}
\begin{aligned}
	\| G(u) \|_{H^{-1}} &\leq \|G(u)-\int_{t}^{t+1}G(u)ds \|_{H^{-1}}+\|u_t(t+1, \cdot)-u_t(t, \cdot) \|_{H^{-1}}\\
					&\quad + \|\int_{t}^{t+1}h(s)u_sds\|_{H^{-1}}.
\end{aligned}
\end{eqnarray*}
Thanks to (\ref{LD2}), (\ref{LD3}), (\ref{LD4}) and (\ref{LD5}), we have 
\[
	\lim_{t\rightarrow \infty}\| G(u) \|_{H^{-1}} =0
\]
Therefore, we know $\psi \in H^{1}$ and  
\begin{eqnarray*}
	-\Delta \psi +f(\psi) =G(\psi)=\lim_{n \rightarrow+\infty}G(u(t_n, \cdot)) =0,
\end{eqnarray*}	
which means that $\psi$ is an equilibrium.
\newline
\textbf{Step 3.}
For any $0<\delta<\beta$, there exists an integer $N$ such that for any $n\geq N$
\begin{eqnarray}
	[H(t_n)]^{\theta(1-\gamma)}-[H(t)]^{\theta(1-\gamma)} \leq  \frac{\delta}{2}, \quad t>t_n \geq 0, \label{T5}
\end{eqnarray}
where $\beta$ is the constant in Lemma \ref{L2}.
Define
\begin{eqnarray*}
	\hat{t}_{n} =\sup\{t>t_n: \|u(s, \cdot)-\psi(\cdot) \|_{H^{1-q}}<\beta, \forall s\in [t_n, t] \}.
\end{eqnarray*}
Due to $2(1-2\theta)\geq 0$ and the uniform boundedness in Lemma \ref{L1}, we deduce from Lemma \ref{L2}  that for $t\in [t_n, \hat{t}_{n})$, $n>N$,
\begin{eqnarray} \label{T5}
\begin{aligned}
	&\|u_{t}\|^{2}_{L^2} +\|G(u)\|^{2}_{H^{-1}}\\
	&\geq \|u_{t}\|^{2}_{L^2}+\frac{1}{2}\|G(u)\|^{2}_{H^{-1}}+\frac{1}{2}|E_{0}(u)-E_{0}(\psi)|^{2(1-\theta)}\\
	&\geq  \frac{\|u_{t}\|^{2(1-2\theta)}_{L^2}}{C_8}\|u_{t}\|^{2}_{L^2}+\frac{\|G(u)\|_{H^{-1}}^{2(1-2\theta)}}{C_9}\|G(u)\|^{2}_{H^{-1}}d+\frac{1}{2}|E_{0}(u)-E_{0}(\psi)|^{2(1-\theta)}\\
	&\geq C_{10}\{\|u_{t}\|^{4(1-\theta)}_{L^2}+\|G(u)\|^{4(1-\theta)}_{H^{-1}}+|E_{0}(u)-E_{0}(\psi)|^{2(1-\theta)}\}\\
	&\geq C_{11}\{\|u_{t}\|^{4(1-\theta)}_{L^2}+\|u_{t}\|^{2(1-\theta)}_{L^2}\|G(u)\|^{2(1-\theta)}_{H^{-1}}+|E_{0}(u)-E_{0}(\psi)|^{2(1-\theta)}\}\\
	&\geq C_{12} H(t)^{2(1-\theta)}.
\end{aligned}
\end{eqnarray}
Then we deduce from (\ref{T2}) that for $t\in [t_n, \hat{t}_{n})$, $n>N$,
\[
	H^{\prime}(t)  \leq -\frac{C_{13}}{(t+1)^{\alpha}}H(t)^{2(1-\theta)}, 
\]
so that
\begin{eqnarray} \label{T6}
	H(t)\leq
	\left
    	\{
        		\begin{array}{ll}
             		C_{14}(1+t)^{-\frac{1-\alpha}{1-2\theta}}, & \theta\in(0, \frac{1}{2}),  \\
                           C_{15}\exp(-C_{11}t^{1-\alpha})  & \theta = \frac{1}{2}. 
        \end{array}
\right.
\end{eqnarray}
Taking $\gamma\in(0, 1)$, from (\ref{T2}) and (\ref{T5}) we also have
\begin{eqnarray*}
\begin{aligned}
	-\frac{\mathrm{d}}{\mathrm{d}t}[H(t)^{\theta(1-\gamma)}]=&-\theta(1-\gamma) H^{\prime}(t)[H(t)]^{\theta(1-\gamma)-1}\\ \label{T3}
											\leq &-C_{16}\theta(1-\gamma)(\|u_t\|_{L^2}+\|G(u)\|_{H^{-1}})^{2}H(t)^{\theta(1-\gamma)-1}\\
											\leq & -\frac{C_{17}}{(t+1)^{\alpha}}H(t)^{-\theta\gamma}(\|u_t\|_{L^2}+\|G(u)\|_{H^{-1}})
\end{aligned}
\end{eqnarray*}
for $t\in [t_n, \hat{t}_{n})$, $n>N$. Note that $(1-\alpha)(1-2\theta)^{-1}\theta\gamma>\alpha$. By (\ref{T6}), we obtain
\[
	\frac{C_{17}}{(t+1)^{\alpha}}H(t)^{-\theta\gamma}\rightarrow +\infty, \quad t\rightarrow +\infty.
\]
Therefore,
\begin{eqnarray}
\begin{aligned}
	\|u_t\|_{L^2}\leq -C_{18}\frac{\mathrm{d}}{\mathrm{d}t}[H(t)^{\theta(1-\gamma)}] \label{T8}
\end{aligned}
\end{eqnarray}
for $t\in [t_n, \hat{t}_{n})$, $n$ large enough. Then we see that
\begin{eqnarray*}
\begin{aligned}
	\sup_{t\in [t_n, \hat{t}_{n})} \|u(t, \cdot)-\psi\|_{L^2}\leq  \|u(t_n, \cdot)-\psi\|_{L^2} + C_{19}|\int_{t_n}^{\hat{t}_n}\frac{\mathrm{d}}{\mathrm{d}t}[H(t)^{\theta(1-\gamma)}]dt|,
\end{aligned}
\end{eqnarray*}
which implies that $\hat{t}_{n}=+\infty$ when $n$ is large enough. Then we assert that $u(t, \cdot)$ converges to $\psi$ in $L^{2}$ as $t$ goes to $+\infty$.
Since $u$ is relatively compact in $H^{1-q}(\Omega)$ (see in Lemma {\ref{L1}}), then we have
\begin{eqnarray}
	\|u(t, \cdot)-\psi\|_{H^{1-q}} \rightarrow 0, \quad t\rightarrow+\infty. \label{T7}
\end{eqnarray}	
From (\ref{T7}), we can deduce 
\begin{eqnarray*}
	|\int_{\Omega}F(u)-F(\psi)dx|\rightarrow 0, \quad t\rightarrow+\infty. 
\end{eqnarray*}
Due to
\[
	\lim_{t\rightarrow\infty}H(t)=\lim_{t\rightarrow\infty}\{\| u_{t}(t, \cdot)\|_{L^2}+E_{0}(v)-E_{0}(\psi)\}=0,
\]
then we know $\{(0, \psi): \psi\in S\}$ is the attracting set for system (\ref{1}), that is, 
\[
	\lim_{t\rightarrow+\infty}(\| u_{t}(t, \cdot)\|_{L^2}+ \| u(t, \cdot)-\psi\|_{H^1})=0.
\]	
\textbf{Step 4.}
From (\ref{T8}) and (\ref{T7}), it infers that
\[
	\|v(t, \cdot)-\psi\|_{L^{2}} \leq \int_{t}^{\infty}\|v_{t}\|_{L^2}ds \leq C_{20}[H(t)]^{\theta(1-\gamma)},
\]
which implies the required speed estimates from (\ref{T6}). \qed
\end{proof}

\section{Numerical simulation}
In this section, we present several examples to illustrate the evolution of the solutions to the system (\ref{1}). We will show how the role of damping coefficient $h(t)$ affects the dynamical behaviors of the solution. We consider the following one-dimensional equations
\begin{eqnarray} \label{Example1}
\left
    \{
        \begin{array}{ll}
            u_{tt} + h(t) u_{t}  - u_{xx}+ G(u) = 0 &(t, x)\in (0, T)\times(-L, L) \\
            u(t, -L) = u(t, L) = 0 &t\in (0, T) \\
            (u(0,x), u_{t}(0,x) = (f(x), g(x))  &x \in (-L, L)
        \end{array}
\right.
\end{eqnarray}
with the initial condition
\begin{eqnarray*}
\begin{aligned}
	f(x) = 0, \quad g(x) = \frac{4\sqrt{1-c^{2}}}{ \cosh(x\sqrt{1-c^{2}})}.
\end{aligned}
\end{eqnarray*}
Take $h(t)=h_{i}(t)$, $i=0$, $1$, ..., $6$ with
\begin{eqnarray*}
\begin{aligned}
	&h_{0}=0, \quad h_1=1,\quad h_2=\frac{1}{\sqrt{t+1}},\quad h_3=\frac{1}{t+1}, \\
	&h_4=\sqrt{t},\quad h_5=t,\quad h_6=t^{\frac{3}{2}},
\end{aligned}
\end{eqnarray*}
also, we consider two cases of the function G(u): one is
\[
	G(u)=G_{1}(u)=a u + |u|^{p-1}u, \quad p\geq 1,
\] 
and the other is
\[
	G(u)=G_{2}(u) =b \sin u, \quad b>0.
\]
The system \ref{Example1} with the nonlinear term $G_1(u)$ is so-called dissipative Klein-Gordon equation, and with $G_2(u)$ is dissipative sine-Gordon equation.
Here, the Lojasiewicz exponent for sine-Gordon equation is $\frac{1}{2}$. And if $a>-\frac{\pi^2}{L^2}$, which is the first eigenvalue for one-dimensional Laplace operator with Dirichlet boundary condition, then the Lojasiewicz exponent for Klein-Gordon equation is $\frac{1}{2}$. If $a<-\frac{\pi^2}{L^2}$, the Lojasiewicz exponent is $\frac{1}{p+1}$. We will use central differences for both time and space derivatives and the following parameters set:
\begin{center}
\begin{tabular}{l|l}
\hline
Space interval & $L =20$\\
Space discretization & $\Delta x = 0.1$ \\
Time discretization & $\Delta t = 0.05$\\
Amount of time steps & $T= 200$ \\
Velocity of initial wave & $c=0.2$ \\
\hline
\end{tabular}
\end{center}

As for the non-damping case $h(t)=h_0=0$, the numerical results can be seen in figure \ref{fig1} (a) and (b). As can be seen easily, The solutions to these two equations does not decay as time goes to infinity. This is because the systems are conservative, that is, the energy of these two systems $E_{u}(t)$ keep to be some constant $E_{u}(0)$ depended fully on the initial data.

If the systems are damped by time-dependent damping, the numerical results can be seen in figure \ref{fig2} (a) and (b). It is clearly that the damping coefficients affect the dynamical behaviors of the solutions to these two systems. From the figures, we also see that $L^2$-norms of the solutions to these two equations converge to some equilibrium, when $h(t)=h_{i}(t)$, $i=1$, $2$, $4$, $5$. For $h(t)=h_3=\frac{1}{t+1}$, $L^2$-norms of the solutions keep on oscillating as time goes to infinity. And if $h(t)=h_6=t^{\frac{3}{2}}$, $L^2$-norms of the solutions also converge to some constant, but which is not the value of some equilibrium. These numerical results are in accordance with the conclusion given in Theorem \ref{T1}.

How the Lojasiewicz exponent affect the convergence speeds of the solutions to these two equations?
In Theorem \ref{T1}, whether the convergence for the systems is exponential or polynomial depends on the Lojasiewicz exponent $\theta$, but the convergence rate is in terms of the damping coefficient $h(t)$. These can be seen from figure \ref{fig3} (a) and (b).

\begin{remark}
When $h(t)\sim t$, whether convergence results hold in unknown theoretically. From the numerical simulation as in figure \ref{fig2} or \ref{fig3}, we know the solution converges to some equilibrium in $L^2$-norm.
\end{remark}

\section*{Authors contributions}
All the authors contributed equally and significantly in writing this paper. All authors read and approved the final manuscript.

\section*{Declaration of Competing Interest}
The authors declare that they have no competing interests.

\section*{Acknowledgements}
The authors thank the anonymous referees very much for the helpful suggestions.


\bibliography{mybibfile}

\begin{figure}[htbp]
\centering
\subfigure[sine-Gordon equation]{      
\includegraphics[width=9.5cm]{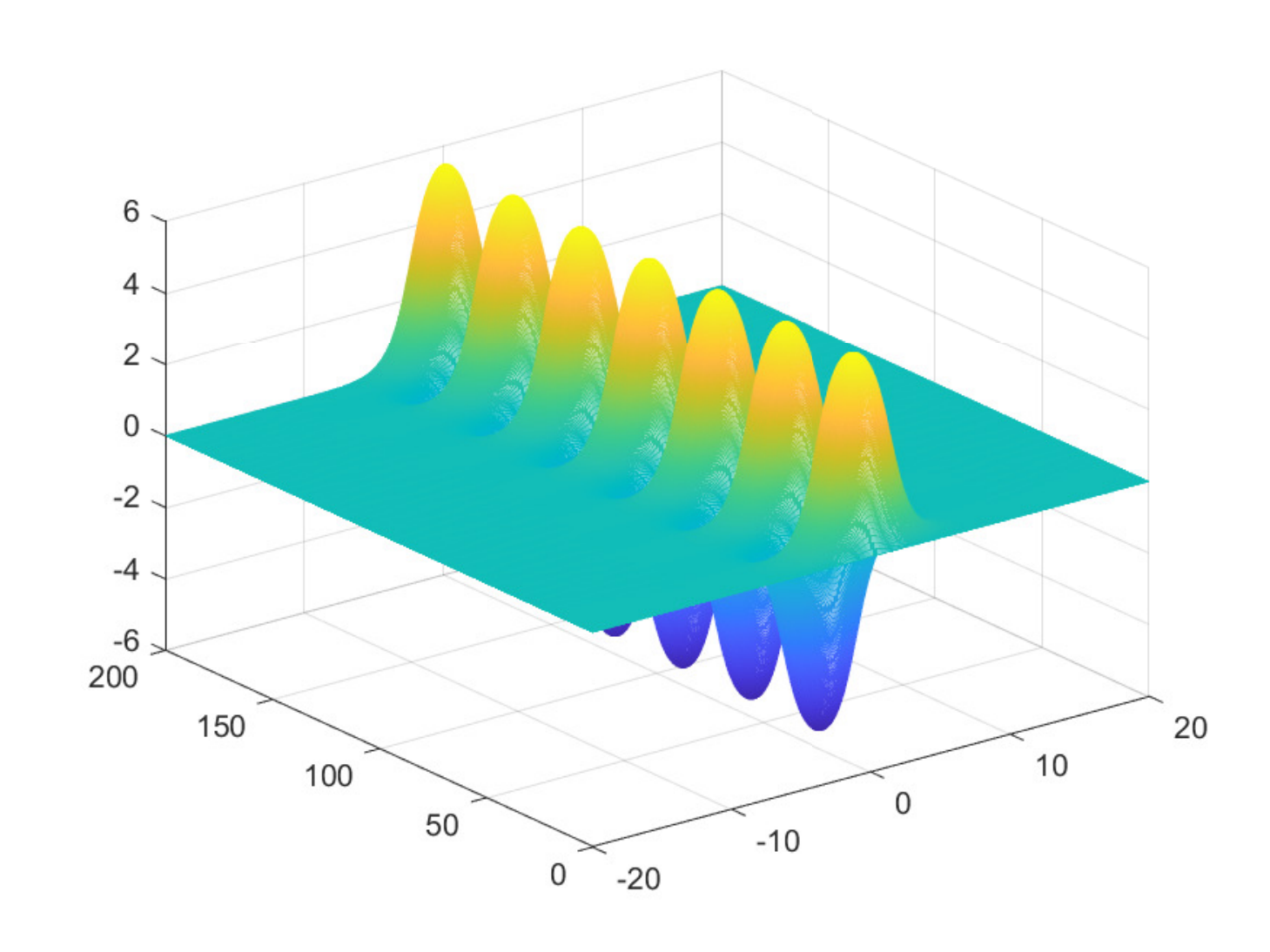}
}
\hspace{0in}
\subfigure[Klein-Gordon equaiton]{
\includegraphics[width=9.5cm]{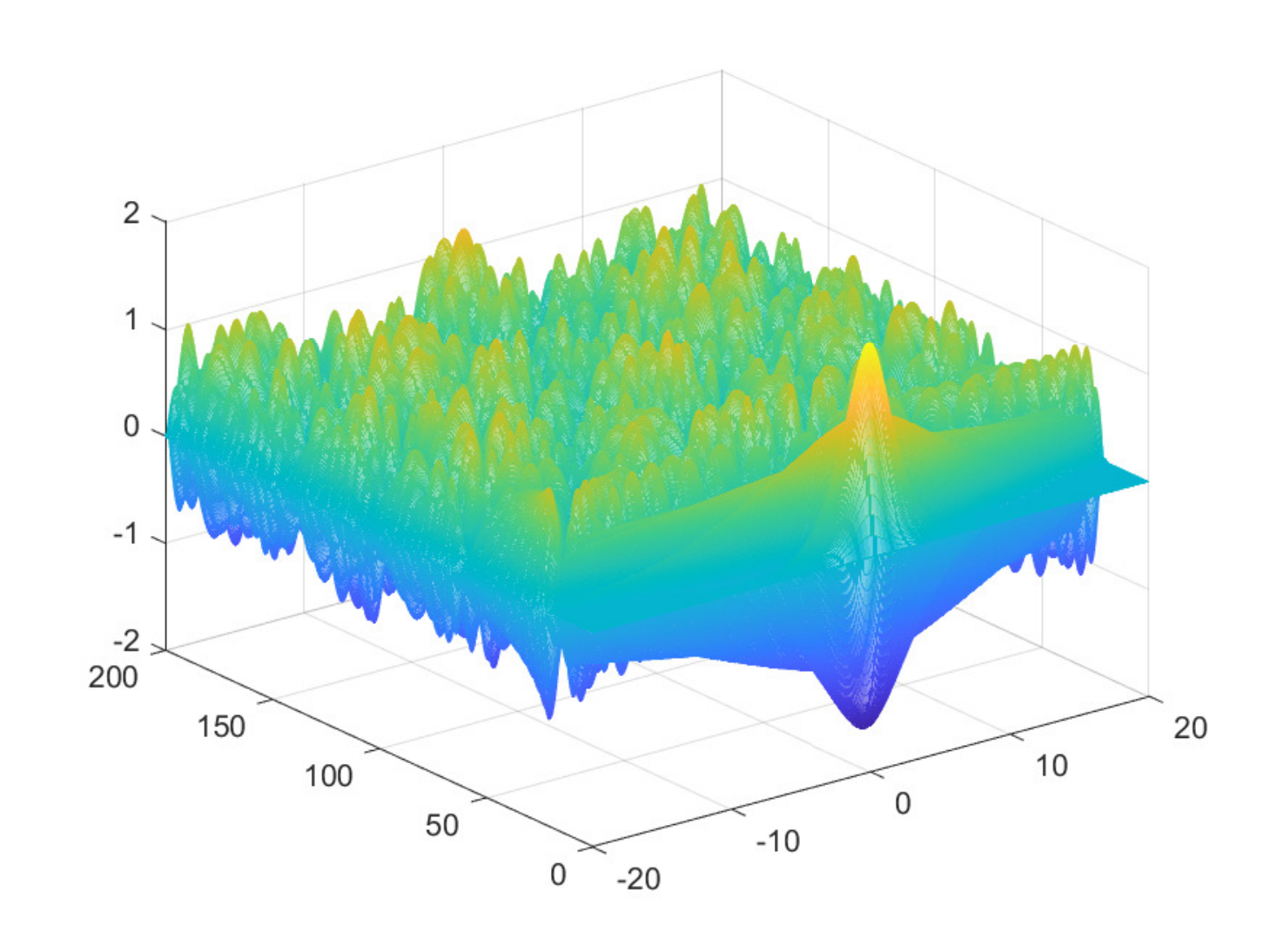}
}
\caption{Dynamical behaviors for two equations without damping. (a) $h(t)=h_0$, $b=1$ The tendency of the solution to sine-Gordon equation. (b) $h(t)=h_0$,  $a=1$, $p=3$ The tendency of the solution to Klein-Gordon equation.}
\label{fig1}
\end{figure}

\begin{figure}[htbp]
\centering
\subfigure[sine-Gordon equation]{      
\includegraphics[width=11cm]{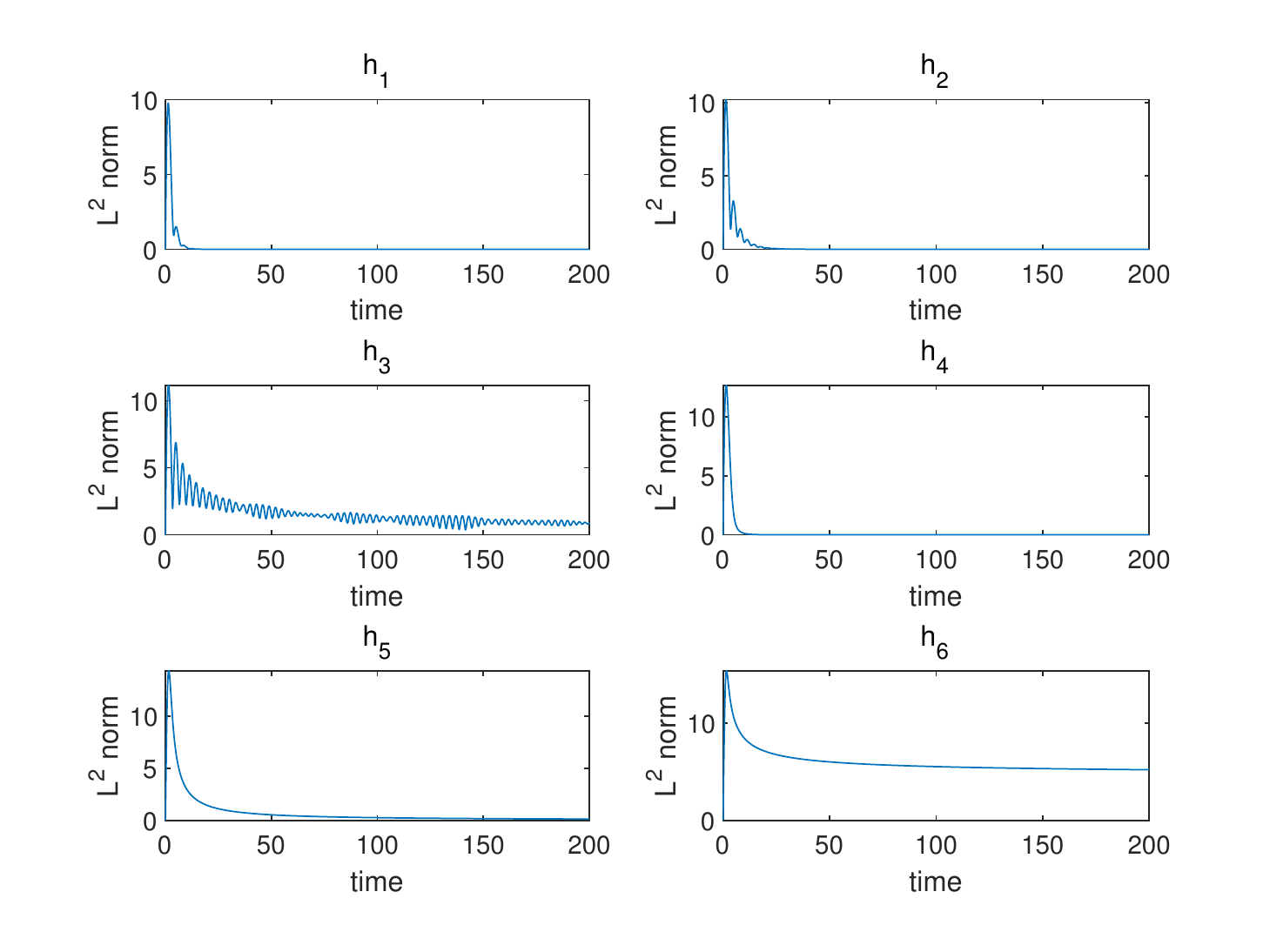}
}
\hspace{0in}
\subfigure[Klein-Gordon equaiton]{
\includegraphics[width=11cm]{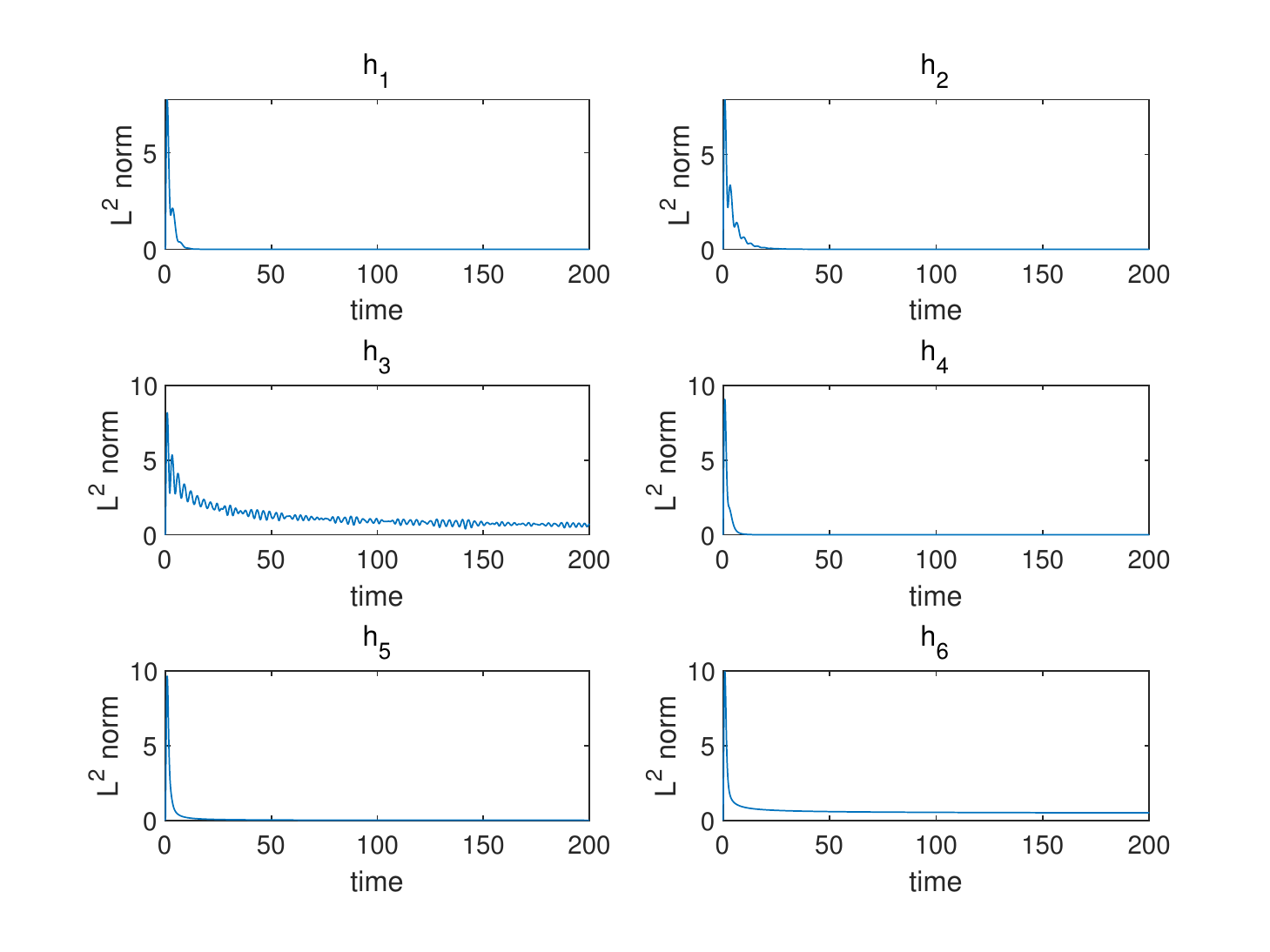}
}
\caption{Dynamical behaviors for two equations with time-dependent damping. (a) $h(t)=h_i$, $b=1$ The tendency of $L^2$-norms of the solution to sine-Gordon equation. (b) $h(t)=h_i$,  $a=1$, $p=3$ The tendency of $L^2$-norms of the solution to Klein-Gordon equation.}
\label{fig2}
\end{figure}

\begin{figure}[htbp]
\centering
\subfigure[$h(t)=h_2$, $p=3$]{      
\includegraphics[width=11cm]{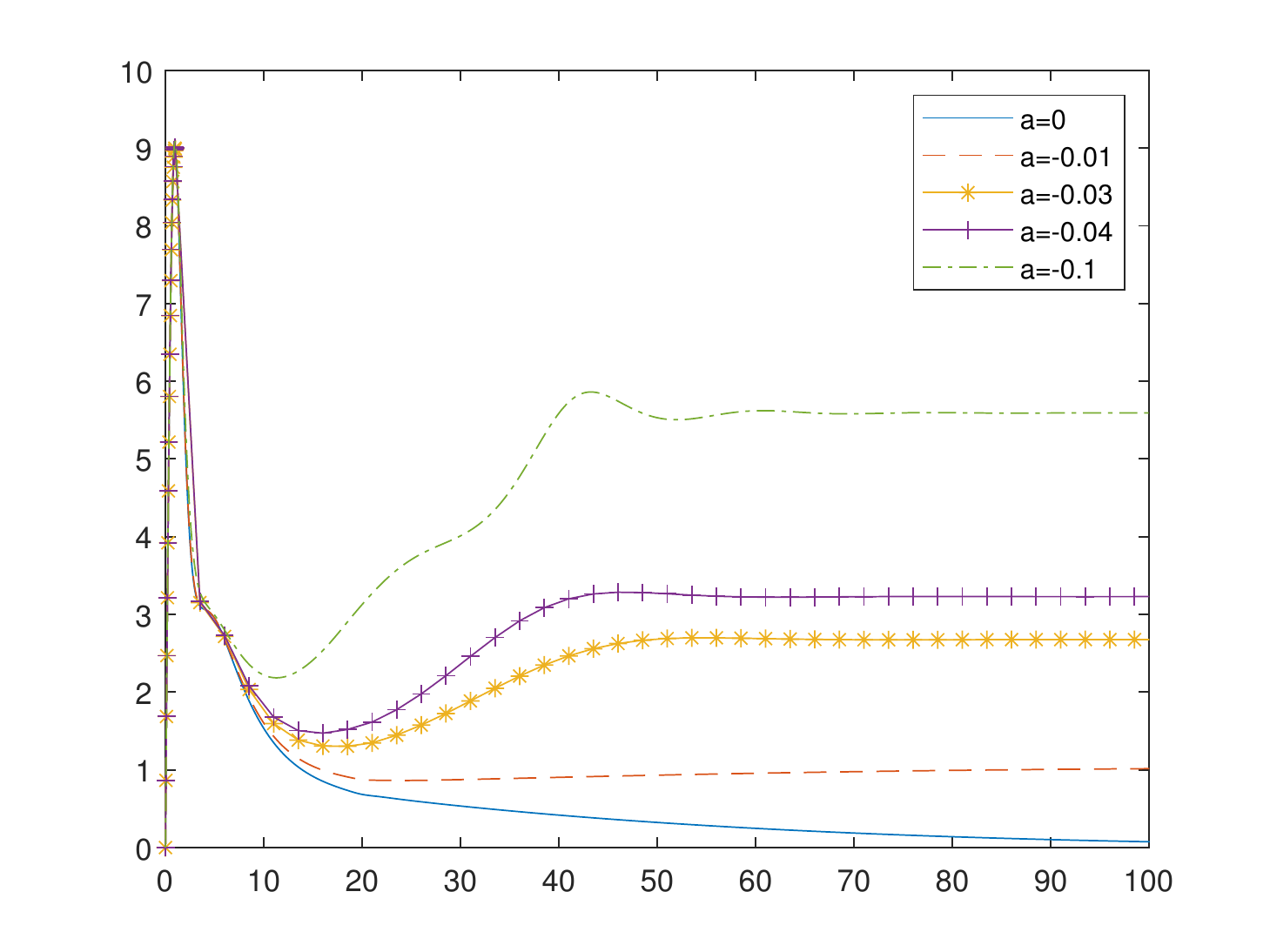}
}
\hspace{0in}
\subfigure[$h(t)=h_i$,  $a=-0.1$, $p=3$]{
\includegraphics[width=11cm]{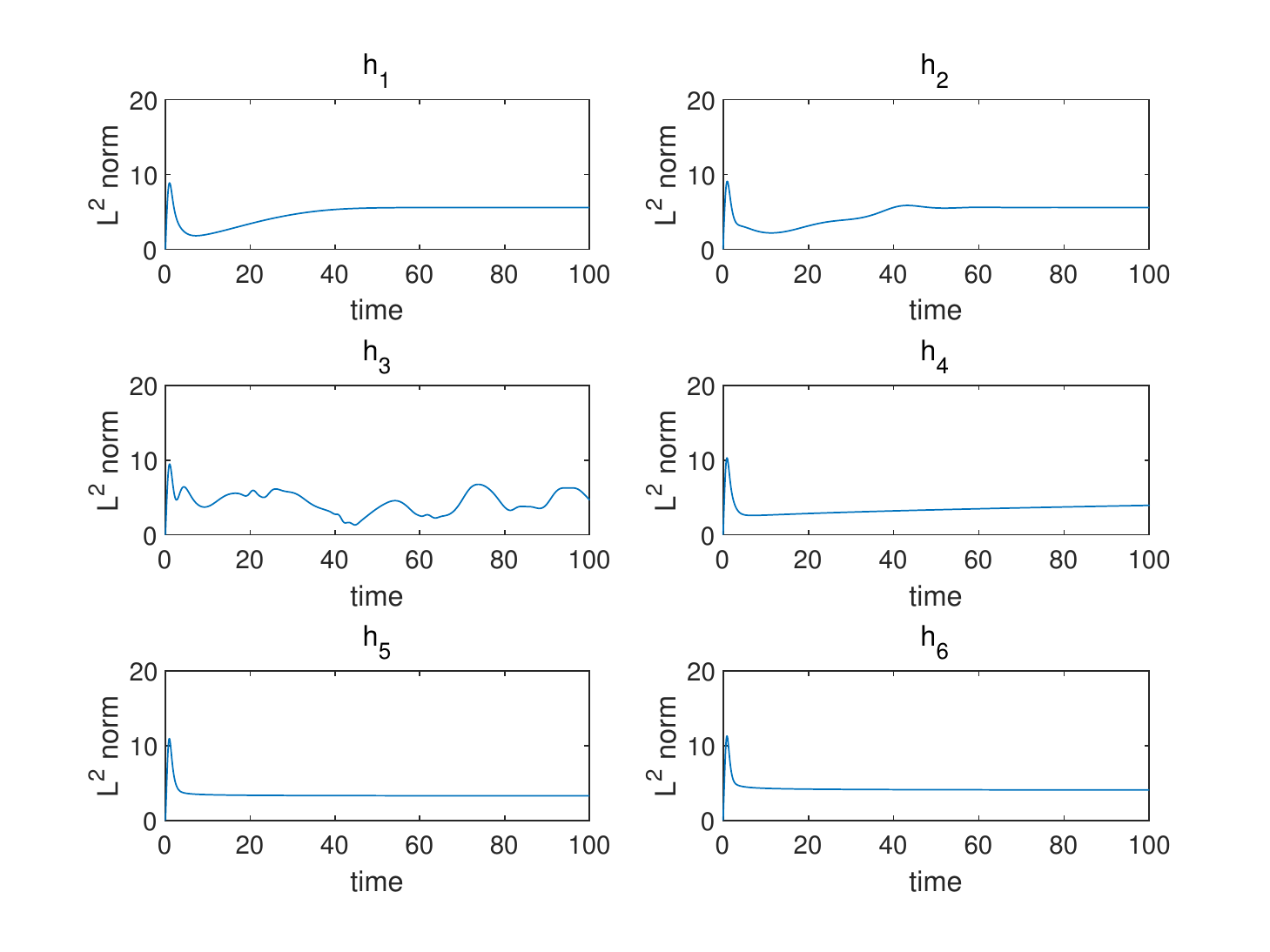}
}
\caption{Convergence rates for dissipative Klein-Gordon equation.}
\label{fig3}
\end{figure}

\end{document}